\def\@typesizes{%
       \or{5}{6.5}\or{6}{7.5}\or{7}{8.5}\or{8}{11}\or{9}{12}%
       \or{10}{13}
       \or{\@xipt}{14}\or{\@xiipt}{15}\or{\@xivpt}{18}%
       \or{\@xviipt}{20}\or{\@xxpt}{24}}
\numberwithin{equation}{section}
\newtheorem{theorem}{Theorem}[section]
\newtheorem{corollary}[theorem]{Corollary}
\newtheorem{remark}[theorem]{Remark}
\newtheorem{definition}[theorem]{Definition}
\newcommand{\nonprint}[1]{}
\begin{document}

\title[Continuity in a parameter]{Continuity in a parameter of solutions to boundary-value problems in Sobolev spaces}

\author{Olena Atlasiuk}
\address{
University of Helsinki, Department of Mathematics and Statistics, P.O. Box 68, Pietari Kalmin katu 5, 00014 Helsinki, Finland and \\
Institute of Mathematics of the National Academy of Sciences of Ukraine, st. Tereschenkivska 3, 01024 Kyiv, Ukraine }

\email{olena.atlasiuk@helsinki.fi}

\author{Vladimir Mikhailets}
 \address{ing's College London, Strand, WC2R 2LS London, UK and Institute of Mathematics of the National Academy of Sciences of Ukraine, st. Tereschenkivska 3, 01024 Kyiv, Ukraine (mikhailets@imath.kiev.ua}

\email{mikhailets@imath.kiev.ua}

\begin{abstract}
We study the most general class of linear inhomogeneous boundary-value problems for systems of ordinary differential equations of an arbitrary order whose solutions and right-hand sides belong to appropriate Sobolev spaces. For parameter-dependent problems from this class, we prove a constructive criterion for their solutions to be continuous in the Sobolev space with respect to the parameter. We also prove a two-sided estimate for the degree of convergence of these solutions to the solution of the nonperturbed problem.
\end{abstract}

\maketitle

\textbf{Keywords:} differential system, boundary-value problem, Sobolev space, continuity in parameter.

2020 Mathematics Subject Classification: 34B05, 34B08, 47A53

\textit{The authors thank Prof. Aleksandr Murach for his discussion of the paper and valuable remarks.}

\maketitle

\tableofcontents

\section{Introduction}\label{Sec.1}
The investigation of the solutions of systems of ordinary differential equations is an important part of numerous problems of contemporary analysis and its applications (see, e.g., \cite{BochSAM2004} and the references therein). Questions concerning limit transition in parameter-dependent differential equations arise in various mathematical problems. I.I. Gikhman \cite{Gikhman}, M.A. Krasnoselskii and S.G. Krein \cite{KrasnKrein}, J. Kurzweil and Z. Vorel \cite{KurzweilVorel} obtained fundamental results on the continuity with respect to the parameter of solutions to the Cauchy problem for nonlinear differential systems. For linear systems, these results were refined and supplemented by A.Yu. Levin \cite{Levin}, Z. Opial \cite{Opial}, W.T. Reid \cite{Reid}, and T.K. Nguen \cite{Nguen}.

Unlike Cauchy problems, the solutions to such problems may not exist or may not be unique. Thus, it is interesting to investigate the character of the solvability of inhomogeneous boundary-value problems in Sobolev spaces and the dependence of their solutions on the parameter \cite{GnypKodliukMikhailets2015, HnypMikhailetsMurach2017, KodliukMikhailets2013}. These questions are studied in the best way for ordinary differential systems of the first order. I.T. Kiguradze \cite{Kigyradze2003} and then M. Ashordia \cite{Ashordia} introduced and investigated a class of general linear boundary-value problems for systems of first order differential equations. For general linear boundary-value problems, the conditions required for the Fredholm property and the continuous dependence of the solutions on parameters were established by I.T. Kiguradze \cite{Kigyradze1987, Kigyradze1975}.

V.A. Mikhailets and his followers introduced and studied the generic classes of boundary-value problems for systems of ordinary differential equations with respect to the Sobolev spaces or to the spaces of continuously differentiable functions. They proved that such problems are Fredholm, and obtained criterion for their well-posedness and continuity in the parameter of their solutions in these spaces. These results have been used for the investigation of multipoint boundary-value problems \cite{Atl3, Atl4, AtlasiukMikhailets2024}, Green's matrices, and used in the spectral theory of differential operators with singular coefficients \cite{GoriunovMikhailetsPankrashkin2013EJDE, GoriunovMikhailets2010MFAT, GoriunovMikhailets2012UMJ, GoriunovMikhailets2010MN}.

The most general classes of linear boundary-value problems are considered in \cite{AtlasiukMikhailets20191, AtlasiukMikhailets20192, AtlasiukMikhailets2024, AtlasiukMikhailets20194, MikhAtlSkor}. These classes relate to the classical scale of complex Sobolev spaces and are introduced for the systems whose right-hand sides and solutions run through the corresponding Sobolev spaces. The boundary conditions for these systems are given in the most general form by means of an arbitrary continuous linear operator given on the Sobolev space of the solutions. Therefore, it is natural to say that these boundary-value problems are generic with respect to the corresponding Sobolev space. Since the usual methods of the theory of ordinary differential equations are not applicable to these problems, their study is of a special interest and requires new approaches and methods.
The present paper extends these results onto differential systems of an arbitrary order. In contrast to the method of \cite{HnypMikhailetsMurach2017}, our approach is general and allows us to investigate solutions to boundary-value problems not only in Sobolev spaces of integer order \cite{MikhAtlSkor, MikhMurachSol2016}.

The paper is organised as follows. In Section~\ref{Sec.1}, we recall known facts concerning continuity in a parameter of solutions to boundary-value problems. In Section~\ref{Sec.2}, we give statement of the problem, that is, the inhomogeneous boundary-value problem \eqref{bound_pr_1}, \eqref{bound_pr_2}. 
In Section~\ref{Sec.3}, we formulate a constructive criterion under which the solutions of parameter-dependent problems are continuous in the Sobolev space with respect to the parameter. Besides, we establish a two-sided estimate for the degree of convergence of the solutions.
In Section~\ref{Sec.4}, we prove our results.

\section{Statement of the problem}\label{Sec.2}

Let $(a,b)\subset\mathbb{R}$ be a compact interval and suppose parameters $$\{m, n+1, r\} \subset \mathbb{N} \quad \mbox{and} \quad 1\leq p\leq\infty.$$
We use the complex Sobolev space $W_p^n:=W_p^n([a,b];\mathbb{C})$ and set $W_p^{0}:=L_p$. By $$(W_p^n)^{m}:=W_p^n([a,b];\mathbb{C}^{m}) \quad\mbox{and} \quad (W_p^n)^{m\times m}:=W_p^n([a,b];\mathbb{C}^{m\times m})$$ we denote the Sobolev spaces of vector-valued and matrix-valued functions, respectively, whose entries belong to the Sobolev space $W_p^n$ of scalar functions on $(a,b)$, with the vectors having $m$ entries and with the matrices being of $m\times m$ type. By $\|\cdot\|_{n, p}$ we denote the norms in these spaces. They are the sums of the corresponding norms in $W_p^n$ of all components of vector-valued or matrix-valued functions from these spaces. It will be always clear from context in which Sobolev space (scalar or vector-valued or matrix-valued functions) these norms are considered. For $m=1$, all these spaces coincide. It is known that the spaces   are Banach spaces. If $p<\infty$, they are separable and have Schauder bases.

We consider the following inhomogeneous boundary-value problem for a system of $m$ linear differential equations of order $r$:
\begin{equation}\label{bound_pr_1}
(Ly)(t):=y^{(r)}(t) + \sum\limits_{j=1}^rA_{r-j}(t)y^{(r-j)}(t)=f(t), \quad t\in(a,b),
\end{equation}
\begin{equation}\label{bound_pr_2}
By=c.
\end{equation}
Here, the matrix-valued functions $A_{r-j}(\cdot)$ belong to the space $(W_p^n)^{m\times m}$, vector-valued function $f(\cdot)$ belongs to the space $(W^n_p)^m$, vector $c$ belongs to the space $\mathbb{C}^{rm}$, and $B\colon (W^{n+r}_p)^m\rightarrow\mathbb{C}^{rm}$ is a linear continuous operator.

A solution of the boundary-value problem \eqref{bound_pr_1}, \eqref{bound_pr_2} is understood as a vector-valued function $y(\cdot)\in (W_{p}^{n+r})^m$ that satisfies equation \eqref{bound_pr_1} (everywhere for $n\geq 1$, and almost everywhere for $n=0$) on $(a,b)$ and equality \eqref{bound_pr_2} (which means $rm$ scalar boundary conditions). Indeed, if the right-hand side $f(\cdot)$ of the system runs through the whole space $(W_{p}^{n})^m$, then the solution $y(\cdot)$ to the system runs through the whole space $(W^{n+r}_p)^m$. The boundary condition \eqref{bound_pr_2} is the most general for the differential system \eqref{bound_pr_1}.

Let the parameter $\mu$ runs through the close interval $I \subset \mathbb{R}$. We consider the following parameter-dependent inhomogeneous boundary-value problem of the form \eqref{bound_pr_1}, \eqref{bound_pr_2} for a system of $m$ linear differential equations of order $r$:
\begin{equation}\label{bound_z1}
L(\mu)y(t,\mu):=y^{(r)}(t,\mu) + \sum\limits_{j=1}^rA_{r-j}(t,\mu)y^{(r-j)}(t,\mu)=f(t,\mu), \qquad t\in(a,b),
\end{equation}
\begin{equation}\label{bound_z2}
B(\mu)y(\cdot,\mu)=c(\mu).
\end{equation}
Here, for every $\mu \in I$, the unknown vector-valued function $y(\cdot,\mu)$ belongs to the space $(W^{n+r}_p)^m$, and we arbitrarily choose the matrix-valued functions $A_{r-j}(\cdot,\mu)\in (W_p^n)^{m\times m}$, with $j\in \{1, \ldots, r\}$, vector-valued function $f(\cdot,\mu)\in (W^n_p)^m$, vector $c(\mu)\in \mathbb{C}^{rm}$, and linear continuous operator
\begin{equation}\label{oper_B(e)v}
B(\mu)\colon (W^{n+r}_p)^m\rightarrow\mathbb{C}^{rm}.
\end{equation}

We interpret vectors and vector-valued functions as columns. Note that the functions $A_{r-j}(t,\mu)$ are not assumed to have any regularity with respect to $\mu$.

Let us specify the sense in which equation \eqref{bound_z1} is understood. A solution of the boundary-value problem \eqref{bound_z1}, \eqref{bound_z2} is understood as a vector-valued function $y(\cdot,\mu)\in (W^{n+r}_p)^m$ that satisfies equation \eqref{bound_z1} (everywhere for $n\geq 1$, and almost everywhere for $n=0$) on $(a,b)$ and equality \eqref{bound_z2} (which means $rm$ scalar boundary conditions). The boundary condition \eqref{bound_z2} with an arbitrary continuous operator \eqref{oper_B(e)v} is the most general for the differential system \eqref{bound_z1}. Indeed, if the right-hand side $f(\cdot,\mu)$ of the system runs through the whole space $(W_{p}^{n})^m$, then the solution $y(\cdot,\mu)$ to the system runs through the whole space $(W^{n+r}_p)^m$. This condition covers all the classical types of boundary conditions such as initial conditions in the Cauchy problem, various multipoint conditions, integral conditions, conditions used in mixed boundary-value problems, and also nonclassical conditions containing the derivatives, generally fractional, and the order of these derivatives may exceed the order of the differential equation. Therefore, the boundary-value problem \eqref{bound_z1}, \eqref{bound_z2} is generic with respect to the Sobolev space $W^{n+r}_p$.

\section{Main results}\label{Sec.3}

With the boundary-value problem \eqref{bound_z1}, \eqref{bound_z2}, we associate the linear continuous operator
\begin{equation}\label{(L,B)vp}
\left(L(\mu),B(\mu)\right) \colon (W^{n+r}_p)^m\to (W^{n}_p)^m\times\mathbb{C}^{rm}.
\end{equation}

Recall that a linear continuous operator $T\colon X \rightarrow Y$, where $X$ and $Y$ are Banach spaces, is called a Fredholm operator if its kernel $\ker T$ and cokernel $Y/T(X)$ are finite-dimensional. If operator $T$ is a Fredholm one, then its range $T(X)$ is closed in $Y$ (see, e. g., \cite[Lemma~19.1.1]{Hermander1985}). The finite index of the Fredholm operator $T$ is defined by the formula
$$
\mathrm{ind}\,T:=\dim\ker T-\dim(Y/T(X)) \in \mathbb{Z}.
$$

According to \cite[Theorem 2.1]{AtlasiukMikhailets20194}, operator \eqref{(L,B)vp} is a Fredholm one with zero index for every $\mu \in I$.

Let us consider the following condition for a fixed point $\mu_0 \in I$.

\textbf{Condition (0)}. The homogeneous boundary-value problem
\begin{equation*}
L(\mu_0)y(t;\mu_0)=0,\quad t\in (a,b),\quad B(\mu_0)y(\cdot;\mu_0)=0
\end{equation*}
has only the trivial solution.

Let us now give our basic concepts.

\begin{definition}\label{defin_vp} The solution to the boundary-value problem \eqref{bound_z1}, \eqref{bound_z2} depends continuously on the parameter $\mu$ at $\mu_0 \in I$ if the following two conditions are satisfied:
\begin{itemize}
\item[$(\ast)$] There exists a positive number $\varepsilon$ such that, for any $\mu\in (\mu_0-\varepsilon, \mu_0+\varepsilon)$ and arbitrary right-hand sides $f(\cdot;\mu)\in (W^{n}_p)^{m}$ and $c(\mu)\in\mathbb{C}^{rm}$, this problem has a unique solution $y(\cdot;\mu)$ from the space  $(W^{n+r}_p)^{m}$;
\item [$(\ast\ast)$] The convergence of the right-hand sides $f(\cdot;\mu)\to f(\cdot;\mu_0)$ in $(W_p^n)^{m}$ and $c(\mu)\to c(\mu_0)$ in $\mathbb{C}^{rm}$ as $\mu\to\mu_0$ implies the convergence of the solutions
\begin{equation*}\label{4.guv}
y(\cdot;\mu)\to y(\cdot;\mu_0)\quad\mbox{in}\quad (W^{n+r}_p)^{m}
\quad\mbox{as}\quad\mu\to\mu_0.
\end{equation*}
\end{itemize}
\end{definition}

We also consider the next two conditions on the left-hand sides of this problem.

\textbf{Limit Conditions} as $\mu\to\mu_0$:
\begin{itemize}
  \item [(I)] $A_{r-j}(\cdot;\mu)\to A_{r-j}(\cdot;\mu_0)$ in the space $(W^{n}_p)^{m\times m}$ for each number  $j\in\{1,\ldots, r\}$;
  \item [(II)] $B(\mu)y\to B(\mu_0)y$ in the space $\mathbb{C}^{m}$ for every $y\in(W^{n+r}_p)^m$.
\end{itemize}

Let us formulate the main result of the paper.

\begin{theorem}\label{nep v}
The solution to the boundary-value problem \eqref{bound_z1}, \eqref{bound_z2} depends continuously on the parameter~$\mu$ at $\mu_0 \in I$ if and only if this problem satisfies Condition \textup{(0)} and Limit Conditions~\textup{(I)} and~\textup{(II)}.
\end{theorem}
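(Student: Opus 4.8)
The plan is to reformulate the problem in operator-theoretic terms and reduce it to a known continuity criterion for families of Fredholm operators. Since the excerpt already guarantees (via \cite[Theorem 1]{AtlasiukMikhailets20194}) that the operator $(L(\varepsilon),B(\varepsilon))$ in \eqref{(L,B)vp} is Fredholm with zero index for every $\varepsilon\in[0,\varepsilon_0)$, the abstract framework is clear: continuity of solutions in the parameter should be equivalent to the combination of invertibility at $\varepsilon=0$ (Condition (0)) and strong (pointwise) convergence $(L(\varepsilon),B(\varepsilon))\to(L(0),B(0))$ in the strong operator topology. My first step would therefore be to show that the Limit Conditions (I) and (II) are precisely equivalent to the strong convergence $(L(\varepsilon),B(\varepsilon))y\to(L(0),B(0))y$ in $(W_p^n)^m\times\mathbb{C}^{rm}$ for each fixed $y\in(W_p^{n+r})^m$. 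This rests on the fact that multiplication by a matrix-valued function is continuous as a bilinear map, so $A_{r-j}(\cdot;\varepsilon)\to A_{r-j}(\cdot;0)$ in $(W_p^n)^{m\times m}$ forces $A_{r-j}(\cdot;\varepsilon)y^{(r-j)}\to A_{r-j}(\cdot;0)y^{(r-j)}$ in $(W_p^n)^m$; condition (II) is by definition the strong convergence of the boundary part.

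For the sufficiency direction I would proceed as follows. Assuming Condition (0), the zero-index Fredholm operator $(L(0),B(0))$ has trivial kernel, hence trivial cokernel, so it is an isomorphism of $(W_p^{n+r})^m$ onto $(W_p^n)^m\times\mathbb{C}^{rm}$; in particular it has a bounded inverse. The key analytic tool is the stability of invertibility under small perturbations: I would establish that strong convergence of $(L(\varepsilon),B(\varepsilon))$ to an invertible limit, together with a uniform bound on the operator norms of the family on a neighbourhood of $\varepsilon=0$, yields invertibility of $(L(\varepsilon),B(\varepsilon))$ for all small $\varepsilon$ and convergence of the inverses in the strong operator topology. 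This gives $(\ast)$ (existence and uniqueness of solutions for small $\varepsilon$, i.e.\ the isomorphism property) and then $(\ast\ast)$ follows by writing $y(\cdot;\varepsilon)=(L(\varepsilon),B(\varepsilon))^{-1}(f(\cdot;\varepsilon),c(\varepsilon))$ and using strong convergence of the inverses applied to the convergent data. An equivalent and perhaps cleaner route is to subtract the equations at $\varepsilon$ and $0$: one obtains $(L(0),B(0))(y(\cdot;\varepsilon)-y(\cdot;0)) = (f(\cdot;\varepsilon)-f(\cdot;0),c(\varepsilon)-c(0)) - \big((L(\varepsilon),B(\varepsilon))-(L(0),B(0))\big)y(\cdot;\varepsilon)$, apply the bounded inverse $(L(0),B(0))^{-1}$, and control the right-hand side using (I), (II), and a uniform a priori bound $\|y(\cdot;\varepsilon)\|_{n+r,p}\leq C$.

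For the necessity direction, I would argue the contrapositive for each of the three hypotheses. If Condition (0) fails, then $(L(0),B(0))$ has nontrivial kernel and hence nontrivial cokernel, so for suitable data $(f(\cdot;0),c(0))$ the problem at $\varepsilon=0$ has no solution or a nonunique solution, violating $(\ast)$. If a Limit Condition fails, I would construct convergent data whose solutions do not converge, contradicting $(\ast\ast)$; concretely, the failure of strong convergence of the operators, combined with the (assumed) uniform convergence of the inverses demanded by $(\ast\ast)$, produces a contradiction via the same identity rearranged to solve for $\big((L(\varepsilon),B(\varepsilon))-(L(0),B(0))\big)y$.

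The main obstacle I anticipate is the uniform boundedness of the inverses $(L(\varepsilon),B(\varepsilon))^{-1}$ on a neighbourhood of $\varepsilon=0$, since strong (as opposed to norm) convergence of the operators does not by itself guarantee uniform invertibility: the functions $A_{r-j}(t,\varepsilon)$ are explicitly assumed to carry no regularity in $\varepsilon$, so the family need not be norm-continuous and one cannot invoke a Neumann-series perturbation argument naively. The resolution will have to exploit the special triangular structure of $L(\varepsilon)$ (leading term $y^{(r)}$ fixed, lower-order terms perturbed) to pass to an equivalent fixed-point formulation via the fundamental matrix / variation-of-constants representation, reducing the problem to the finite-dimensional boundary block and thereby converting strong convergence of $B(\varepsilon)$ into the needed uniform estimates on a finite-dimensional determinant that stays bounded away from zero for small $\varepsilon$. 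This is where I expect the heaviest lifting, and where a two-sided estimate for the rate of convergence (promised in the abstract) would naturally emerge.
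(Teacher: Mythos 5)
Your high-level framing is sound, and your sufficiency plan does anticipate the paper's actual mechanism (reduction to the finite-dimensional boundary block via fundamental matrices). But the ``key analytic tool'' you state for sufficiency is false as a general principle: strong convergence of a uniformly bounded family to an invertible limit does \emph{not} yield invertibility for small $\varepsilon$, even for Fredholm operators of index zero. On $\ell^2$, the operators $T_k x = x - \langle x, e_k\rangle e_k$ are uniformly bounded, Fredholm of index zero, and converge strongly to the identity, yet none is invertible. So invertibility of $(L(\varepsilon),B(\varepsilon))$ for small $\varepsilon$ cannot come from an abstract perturbation lemma; it must come from the structure you only gesture at in your final paragraph. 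The paper's route is: solve the matrix Cauchy problems with $\varepsilon$-independent initial data to get fundamental matrices $Y_k(\cdot,\varepsilon)$, prove $Y_k(\cdot,\varepsilon)\to Y_k(\cdot,0)$ in $(W^{n+r}_p)^{m\times m}$ by reducing to a first-order system and invoking the known $r=1$ theorem \cite{AtlasiukMikhailets20192}, then use Limit Condition (II) to get convergence of the $rm\times rm$ matrix $[B(\varepsilon)Y(\cdot,\varepsilon)]\to[B(0)Y(\cdot,0)]$, whose limit is nondegenerate by Condition (0); invertibility for small $\varepsilon$ and then $(\ast\ast)$ (via the splitting $y=\hat{y}+z$ into a Cauchy part and a semihomogeneous part) follow. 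Your alternative ``subtract the equations'' argument has the same circularity: the a priori bound $\|y(\cdot;\varepsilon)\|_{n+r,p}\le C$ it requires is only available after this finite-dimensional reduction has been carried out.

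The more serious gap is in your necessity direction, where you assume exactly what must be proved. Your rearranged identity for deriving (II) requires a uniform bound on the \emph{direct} operators $(L(\varepsilon),B(\varepsilon))$ near $\varepsilon=0$; but the hypothesis of the necessity direction is Definition~\ref{defin_vp}, which (with Banach--Steinhaus) bounds only the \emph{inverses}, and strong convergence of inverses implies neither strong convergence nor uniform boundedness of the operators themselves. The paper closes this with two arguments absent from your proposal. First, Limit Condition (I) is proved \emph{directly}, not by contrapositive: one solves the problem with the $\varepsilon$-independent right-hand sides $[B(\varepsilon)Y(\cdot,\varepsilon)]=I_{rm}$, so that $(\ast)$ and $(\ast\ast)$ force convergence of the fundamental matrix $X(\cdot,\varepsilon)$ in the Banach algebra $(W^{n+r}_p)^{rm\times rm}$; since $\det X(t,\varepsilon)\neq0$ on $[a,b]$, inversion is continuous in this algebra, and $K(\cdot,\varepsilon)=-X'(\cdot,\varepsilon)\bigl(X(\cdot,\varepsilon)\bigr)^{-1}\to K(\cdot,0)$, which is (I) and yields $\|A_{r-j}(\cdot,\varepsilon)\|_{n,p}=O(1)$. (Your remark that failure of strong convergence of $L(\varepsilon)$ would let you build bad data is not a construction; producing such data is precisely the difficulty the direct argument avoids.) Second, $\|B(\varepsilon)\|=O(1)$ requires its own contradiction argument: normalize $\omega_k$ with $\|\omega_k\|_{n+r,p}=1$ and $\|B(\varepsilon^{(k)})\omega_k\|\geq\frac{1}{2}\|B(\varepsilon^{(k)})\|$, rescale $y(\cdot;\varepsilon^{(k)}):=\|B(\varepsilon^{(k)})\|^{-1}\omega_k\to0$, and use compactness in $\mathbb{C}^{rm}$ to extract $c(\varepsilon^{(k_p)})\to c(0)\neq0$, contradicting via $(\ast\ast)$ the fact that the limit solution must be $0$. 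Only with these two bounds in hand does your identity (the paper's Step 3) deliver (II). Without them, your necessity argument does not get off the ground.
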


\begin{corollary}\label{nep v}
If Condition \textup{(0)} and Limit Conditions~\textup{(I)} and~\textup{(II)} are satisfied for each $\mu_0 \in I$, then the solution to the boundary-value problem \eqref{bound_z1}, \eqref{bound_z2} exists and is unique for arbitrary right-hand sides of the problem and belongs to the space $C\big(I; (W^{n+r}_p)^{m}\big)$.
\end{corollary}

\begin{remark}
In the case of $r=1$, $I=[0, \varepsilon_0]$, $\mu_0=0$, Theorem \ref{nep v} is proved in \cite[Theorem 1]{AtlasiukMikhailets20192}.

Paper \cite{HnypMikhailetsMurach2017} gives us a constructive criterion under which the solutions to parameter-dependent problems are continuous with respect to the small parameter in the Sobolev spaces $W_{p}^{n}$, where $1\leq p<\infty$. The proof of the criterion is based on the fact that the continuous linear operator $B$, for every $\mu \in I$ and $1\leq p<\infty$, admits the following unique analytic representation
\begin{gather*}\label{st anal vp}
By=\sum _{s=0}^{n+r-1} \alpha_{s}y^{(s)}(a)+\int_{a}^b \Phi(t)y^{(n+r)}(t){\rm d}t, \quad y(\cdot)\in (W_{p}^{n+r})^{m}.
\end{gather*}
Here, the matrices $\alpha_{s}$ belong to the space $\mathbb{C}^{rm\times m}$, and the matrix-valued function $\Phi(\cdot)$ belongs to the space $L_{p^{'}}\big([a, b]; \mathbb{C}^{rm\times m}\big)$, with  $1/p + 1/p^{'}=1$. For $p=\infty$, this formula also defines a continuous operator $B \colon (W_{\infty}^{n+r})^{m} \rightarrow \mathbb{C}^{rm}$. However, there exist continuous operators from $(W_{\infty}^{n+r})^{m}$ to $\mathbb{C}^{rm}$ specified by the integrals over finitely additive measures \cite{Dunford}.

Our method allows to investigate such problems in the Sobolev spaces $W_{p}^{n}$, where $1\leq p\leq \infty$, and some other function spaces (see \cite{Hnyp, MikhMurachSol2016}).
\end{remark}

We supplement our result with a two-sided estimate of the error $$\bigl\|y(\cdot;\mu_0)-y(\cdot;\mu)\bigr\|_{n+r,p}$$ of the solution $y(\cdot;\mu)$ via its discrepancy
\begin{equation*}\label{nevyuzka v}
\widetilde{d}_{n,p}(\mu):=
\bigl\|L(\mu)y(\cdot;\mu_0)-f(\cdot;\mu)\bigr\|_{n,p}+
\bigl\|B(\mu)y(\cdot;\mu_0)-c(\mu)\bigr\|_{\mathbb{C}^{rm}}.
\end{equation*}
Here, we interpret $y(\cdot;\mu)$ as an approximate solution to the problem \eqref{bound_z1}, \eqref{bound_z2}.

\begin{theorem}\label{3.6.th-bound v}
Let the boundary-value problem \eqref{bound_z1}, \eqref{bound_z2} satisfy Conditions \textup{(0)} and Limit Conditions \textup{(I)} and \textup{(II)}. Then there exist positive number $\varepsilon$, $\gamma_{1}$, and $\gamma_{2}$ such that
\begin{equation}\label{3.6.bound}
\begin{aligned}
\gamma_{1}\,\widetilde{d}_{n,p}(\mu)
\leq\bigl\|y(\cdot;\mu_0)-y(\cdot;\mu)\bigr\|_{n+r,p}\leq
\gamma_{2}\,\widetilde{d}_{n,p}(\mu),
\end{aligned}
\end{equation}
for any $\mu\in(\mu_0-\varepsilon,\mu_0+\varepsilon)$. Here, the numbers $\varepsilon$, $\gamma_{1}$, and $\gamma_{2}$ do not depend on $y(\cdot;\mu_0)$, and $y(\cdot;\mu)$.
\end{theorem}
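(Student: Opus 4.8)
The plan is to reduce the two‑sided estimate \eqref{3.6.bound} to uniform bounds, near $\varepsilon=0$, on the operator \eqref{(L,B)vp} and on its inverse. Write $u(\cdot;\varepsilon):=y(\cdot;0)-y(\cdot;\varepsilon)\in(W^{n+r}_p)^m$ for the error. Since $y(\cdot;\varepsilon)$ solves \eqref{bound_z1}, \eqref{bound_z2}, i.e. $L(\varepsilon)y(\cdot;\varepsilon)=f(\cdot;\varepsilon)$ and $B(\varepsilon)y(\cdot;\varepsilon)=c(\varepsilon)$, applying the linear operator $(L(\varepsilon),B(\varepsilon))$ to $u(\cdot;\varepsilon)$ gives
\[
(L(\varepsilon),B(\varepsilon))\,u(\cdot;\varepsilon)=\bigl(L(\varepsilon)y(\cdot;0)-f(\cdot;\varepsilon),\ B(\varepsilon)y(\cdot;0)-c(\varepsilon)\bigr).
\]
The norm of the right-hand side in the product space $(W^n_p)^m\times\mathbb{C}^{rm}$ is precisely $\widetilde{d}_{n,p}(\varepsilon)$, so $\widetilde{d}_{n,p}(\varepsilon)=\|(L(\varepsilon),B(\varepsilon))\,u(\cdot;\varepsilon)\|$. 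Hence \eqref{3.6.bound} will follow from
\[
\|(L(\varepsilon),B(\varepsilon))\|^{-1}\,\widetilde{d}_{n,p}(\varepsilon)\le\|u(\cdot;\varepsilon)\|_{n+r,p}\le\|(L(\varepsilon),B(\varepsilon))^{-1}\|\,\widetilde{d}_{n,p}(\varepsilon),
\]
once both operator norms are controlled uniformly on a neighbourhood of $0$; one then sets $\gamma_1$ to the reciprocal of the supremum of the forward norms and $\gamma_2$ to the supremum of the inverse norms.

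For the forward norm, the differential part is harmless: Limit Condition~(I) means $\|A_{r-j}(\cdot;\varepsilon)-A_{r-j}(\cdot;0)\|_{n,p}\to0$, a convergence of scalars, so the coefficient norms, and therefore $\|L(\varepsilon)\|$, stay bounded on a fixed interval $[0,\delta)$. For the boundary part I would use Limit Condition~(II): $B(\varepsilon)y\to B(0)y$ for each $y$ gives pointwise boundedness of the family $\{B(\varepsilon)\}$, and since $(W^{n+r}_p)^m$ is a Banach space the uniform boundedness principle yields a bound on $\|B(\varepsilon)\|$. Because no regularity of $A_{r-j}(\cdot;\varepsilon)$ or $B(\varepsilon)$ in $\varepsilon$ is assumed, I would run this through a sequential argument: if $\|B(\varepsilon)\|$ were unbounded on every neighbourhood of $0$, there would be $\varepsilon_k\to0+$ with $\|B(\varepsilon_k)\|\to\infty$, yet $\{B(\varepsilon_k)\}$ is pointwise bounded (being pointwise convergent), contradicting the uniform boundedness principle applied to this sequence. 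This gives $C_1:=\sup_\varepsilon\|(L(\varepsilon),B(\varepsilon))\|<\infty$ near $0$, and hence $\gamma_1:=1/C_1$.

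The harder half, and the main obstacle, is the uniform bound on $\|(L(\varepsilon),B(\varepsilon))^{-1}\|$. By Theorem~\ref{nep v}, Condition~(0) together with~(I),~(II) guarantees continuous dependence, so property $(\ast)$ of Definition~\ref{defin_vp} supplies an $\varepsilon_1$ on which the problem has a unique solution for every right-hand side; thus $(L(\varepsilon),B(\varepsilon))$ is a continuous bijection of Banach spaces and, by the open mapping theorem, is boundedly invertible for each $\varepsilon\in[0,\varepsilon_1)$. The difficulty is that Limit Condition~(II) gives only strong convergence of $B(\varepsilon)$, not convergence in operator norm, so $(L(\varepsilon),B(\varepsilon))$ need not converge to $(L(0),B(0))$ in norm and a Neumann-series perturbation of $(L(0),B(0))^{-1}$ is unavailable. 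Instead I would extract the bound from property $(\ast\ast)$ applied with frozen data: for any $(g,d)\in(W^n_p)^m\times\mathbb{C}^{rm}$ the constant right-hand sides $f(\cdot;\varepsilon)\equiv g$, $c(\varepsilon)\equiv d$ converge trivially, and since Conditions~(0),~(I),~(II) do not involve the data, Theorem~\ref{nep v} forces $(L(\varepsilon),B(\varepsilon))^{-1}(g,d)\to(L(0),B(0))^{-1}(g,d)$ as $\varepsilon\to0+$. Thus the family of inverses is pointwise convergent, hence pointwise bounded along any sequence $\varepsilon_k\to0+$; the same sequential uniform boundedness argument as above then rules out $\|(L(\varepsilon),B(\varepsilon))^{-1}\|\to\infty$ and produces $C_2:=\sup\|(L(\varepsilon),B(\varepsilon))^{-1}\|<\infty$ on some $[0,\varepsilon_2)$ with $\varepsilon_2<\varepsilon_1$. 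Set $\gamma_2:=C_2$.

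Collecting the two bounds on $[0,\varepsilon_2)$ and combining them with the identity of the first step gives $\gamma_1\,\widetilde{d}_{n,p}(\varepsilon)\le\|y(\cdot;0)-y(\cdot;\varepsilon)\|_{n+r,p}\le\gamma_2\,\widetilde{d}_{n,p}(\varepsilon)$ for all $\varepsilon\in(0,\varepsilon_2)$, which is \eqref{3.6.bound}. The constants $\varepsilon_2,\gamma_1,\gamma_2$ arise solely from the uniform operator bounds and are therefore independent of the solutions $y(\cdot;0)$ and $y(\cdot;\varepsilon)$; the consistency $\gamma_1\le\gamma_2$ is automatic, since for each $\varepsilon$ one has $\|(L(\varepsilon),B(\varepsilon))\|\,\|(L(\varepsilon),B(\varepsilon))^{-1}\|\ge1$, whence $C_1C_2\ge1$.
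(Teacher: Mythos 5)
Your proposal is correct and follows essentially the same route as the paper's own proof: the identity $(L(\varepsilon),B(\varepsilon))\,(y(\cdot;0)-y(\cdot;\varepsilon))=\bigl(L(\varepsilon)y(\cdot;0)-f(\cdot;\varepsilon),\,B(\varepsilon)y(\cdot;0)-c(\varepsilon)\bigr)$, a uniform bound on the forward operators from strong convergence via a sequential Banach--Steinhaus contradiction, and a uniform bound on the inverses obtained by applying condition $(\ast\ast)$ of Definition~\ref{defin_vp} to $\varepsilon$-independent right-hand sides and invoking Banach--Steinhaus again. Your added remarks (open mapping theorem for bounded invertibility, the consistency $\gamma_1\le\gamma_2$) are harmless refinements of the same argument.
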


Thus, the error and discrepancy of the solution to the problem \eqref{bound_z1}, \eqref{bound_z2} have the same degree of smallness.

\section{Proofs of main results}\label{Sec.4}

\begin{proof}[Proof of Theorem \ref{nep v}]

Let us prove \emph{the necessity}. Suppose that the boundary-value problem \eqref{bound_z1}, \eqref{bound_z2} satisfies Definition \ref{defin_vp}. Then the problem obviously satisfies Condition~(0). It remains to prove that the problem satisfies Limit Conditions (I) and~(II). We divide our proof into three steps.

\emph{Step 1.} Let us prove that the problem \eqref{bound_z1}, \eqref{bound_z2} satisfies Limit Condition \textup{(I)}. For this purpose, we canonically reduce this system to a certain system of the first order differential equations (see, e.g., \cite{Cartan1971}). We put
\begin{gather}\label{yx}
x(\cdot,\mu):=\mathrm{col}\bigl(y(\cdot,\mu),y'(\cdot, \mu),\ldots,y^{(r-1)}(\cdot, \mu)\bigr)\in(W^{n+r}_p)^{rm},\\
g(\cdot,\mu):=\mathrm{col}\bigl(\underbrace{0, \dots, 0}_{(r-1)m},f(\cdot, \mu)\bigr)
\in(W^{n}_p)^{rm},\notag\\
c(\mu):=\mathrm{col}\bigl(c_{1}(\mu),\ldots,c_{r}(\mu)\bigr)
\in\mathbb{C}^{rm}\notag,
\end{gather}
where $\mathrm{col}\bigl(\cdot,\ldots, \cdot)$ means a column vector, and define the block matrix-valued function $K(\cdot,\mu) \in (W^{n}_p)^{rm\times rm}$ as equality
\begin{equation*}\label{AAv-reduced}
K(\cdot,\mu):=\left(
\begin{array}{ccccc}
O_m & -I_m & O_m & \ldots & O_m \\
O_m & O_m & -I_m & \ldots & O_m \\
\vdots & \vdots & \vdots & \ddots & \vdots \\
O_m & O_m & O_m & \ldots & -I_m \\
A_0(\cdot,\mu) & A_1(\cdot,\mu) & A_2(\cdot,\mu) & \ldots & A_{r-1}(\cdot,\mu)\\
\end{array}\right).
\end{equation*}

Vector-valued function $y(\cdot,\mu)\in(W^{n+r}_p)^{m}$ is a solution to the system \eqref{bound_z1} if and only if vector-valued function \eqref{yx} is a solution to the system
\begin{gather*}\label{Cauchi-reduced-DE}
x'(t,\mu)+K(t,\mu)x(t,\mu)=g(t,\mu),
\quad t\in(a,b).
\end{gather*}

We denote by
\begin{equation*}\label{matrix_BYe}
\left[B(\mu)Y(\cdot,\mu)\right]:=\left(\left[B(\mu)Y_0(\cdot,\mu)\right],\dots,\left[B(\mu)
Y_{r-1}(\cdot,\mu)\right]\right) \in \mathbb{C}^{rm\times m}
\end{equation*}
the block numerical matrix of $rm\times m$-dimension. It consists of $r$ square block columns $\left[B(\mu)Y_l(\cdot,\mu)\right]\in \mathbb{C}^{m\times m}$ in which $j$-th column of the matrix $\left[B(\mu)Y_l(\cdot,\mu)\right]$ is the result of the action of the operator $B(\mu)$ in \eqref{oper_B(e)v} on the $j$-th column of the matrix-valued function $Y_l(\cdot,\mu)$.

Consider the following matrix boundary-value problem:
\begin{gather}
Y_l^{(r)}(t,\mu)+\sum\limits_{j=1}^{r}{A_{r-j}(t,\mu)Y_l^{(r-j)}(t,\mu)}=O_{m\times rm},
\quad t\in (a,b),\nonumber \\
\left[B(\mu)Y(\cdot,\mu)\right]=I_{rm}\label{eq2vk}.
\end{gather}
Here,
$$
Y_l(\cdot,\mu):=\left(y_l^{j,k}(\cdot,\mu)\right)_{\substack{j=1,\ldots,m\\ k=1,\ldots,rm}}
$$
is an unknown $m \times rm$ matrix-valued function with entries from $(W^{n+r}_p)^{m \times rm}$, $O_{m \times rm}$ is the zero matrix, and $I_{rm}$ is the identity matrix. This problem is a collection of $rm$ boundary-value problems \eqref{bound_z1}, \eqref{bound_z2} the right-hand sides of which do not depend on $\mu$.

Therefore, this problem has a unique solution $Y(\cdot;\mu)$  for every $\mu\in (\mu_0-\varepsilon,\mu_0+\varepsilon)$ due to condition $(\ast)$ of Definition \ref{defin_vp}. Moreover, due to condition $(\ast\ast)$ of this definition, we have the following convergence
\begin{equation}\label{zb ym}
y_{j,k}(\cdot,\mu)\rightarrow y_{j,k}(\cdot,\mu_0) \quad\mbox{in} \quad W^{n+r}_p \quad\mbox{as}\quad \mu\to\mu_0.
\end{equation}

For any $k\in\{1, \ldots, rm\}$ and $\mu\in (\mu_0-\varepsilon,\mu_0+\varepsilon)$, we define a vector-valued function $x_k(\cdot,\mu)\in(W^{n+r}_p)^{rm}$ by formula \eqref{yx} in which we replace $x(\cdot;\mu)$ with $x_k(\cdot,\mu)$  and take
$$
y(\cdot;\mu):=\mbox{col}\big(y_{1,k}(\cdot;\mu),\ldots, y_{m,k}(\cdot;\mu)\big).
$$

Let $X(\cdot;\mu)$ denote the matrix-valued function from $(W^{n+r}_p)^{rm\times rm}$ such that its $k$-th column is $x_k(\cdot,\mu)$ for each $k\in\{1, \ldots, rm\}$. This function satisfies the matrix differential equation
\begin{equation}\label{X ym}
X'(t,\mu)+K(t,\mu)X(t,\mu)=O_{rm}, \quad t\in (a,b).
\end{equation}
Therefore, $\det X(t;\mu)\neq0$ for all $t\in[a,b]$, since otherwise the columns of the matrix-valued function $X(\cdot;\mu)$ and, hence, of $Y(\cdot;\mu)$ would be linearly dependent on $[a,b]$, contrary to~\eqref{eq2vk}.
Due to \eqref{zb ym}, we have the convergence $X(\cdot;\mu)\rightarrow X(\cdot;\mu_0)$ in the Banach algebra $(W^{n+r}_p)^{rm\times rm}$ as $\mu\to\mu_0$. Hence, $$\big(X(\cdot;\mu)\big)^{-1}\rightarrow \big(X(\cdot;\mu_0)\big)^{-1}$$ in this algebra. Therefore, in view of~\eqref{X ym}, we conclude that
  $$
   K(\cdot;\mu)=-X'(\cdot;\mu)\big(X(\cdot;\mu)\big)^{-1} \rightarrow -X'(\cdot;\mu_0)\big(X(\cdot;\mu_0)\big)^{-1}=K(\cdot;\mu_0)
  $$
 in $(W^{n}_p)^{rm\times rm}$ as $\mu\to\mu_0$. Thus, the problem \eqref{bound_z1}, \eqref{bound_z2} satisfies Limit Condition~(I). Specifically,
\begin{equation}\label{ob Av}
\big\|A_{r-j}(\cdot,\mu)\big\|_{n,p}=O(1) \quad\mbox{as}\quad \mu\to\mu_0 \quad\mbox{for each}\quad j\in\{1, \ldots, r\}.
\end{equation}

\emph{Step 2.} Let us show that the Limit Condition (II) is satisfied. First, we prove that
\begin{equation}\label{ob Bv}
\|B(\mu)\|=O(1) \quad\mbox{as}\quad \mu\to\mu_0.
\end{equation}
Here, $\|\cdot\|$ denotes the norm of the operator \eqref{oper_B(e)v}.

Suppose the contrary. Then there exists a sequence $\left(\mu^{(k)}\right)_{k=1}^{\infty}\subset(\mu_0-\varepsilon,\mu_0+\varepsilon)$ such that
\begin{equation}\label{ob Bev}
\mu^{(k)}\to\mu_0\quad\mbox{and}\quad 0<\left\|B\bigl(\mu^{(k)}\bigr)\right\|\to\infty \quad\mbox{as}\quad k\rightarrow \infty,
\end{equation}
with $\left\|B\bigl(\mu^{(k)}\bigr)\right\|\neq0$ for all $k\in \mathbb{N}$. For every integer $k\in \mathbb{N}$, we choose a vector-valued function
$\omega_{k}\in(W^{n+r}_p)^{m}$ that satisfies the conditions
\begin{equation}\label{ob wv}
\|\omega_{k}\|_{n+r,p}=1 \quad \mbox{and} \quad \bigl\|B\bigl(\mu^{(k)}\bigr)\omega_{k}\bigr\|_{\mathbb{C}^{rm}}\geq \frac{1}{2}
\bigl\|B\bigl(\mu^{(k)}\bigr)\bigr\|.
\end{equation}

Besides, we put
\begin{gather*}
y\bigl(\cdot;\mu^{(k)}\bigr):=
\bigl\|B\bigl(\mu^{(k)}\bigr)\bigr\|^{-1}\omega_{k} \in (W^{n+r}_p)^{m},\\
f\bigl(\cdot;\mu^{(k)}\bigr):=
L\bigl(\mu^{(k)}\bigr)\,y\bigl(\cdot;\mu^{(k)}\bigr) \in (W^{n}_p)^{m},\\
c\bigl(\mu^{(k)}\bigr):=B\bigl(\mu^{(k)}\bigr)\,y\bigl(\cdot;\mu^{(k)}\bigr) \in \mathbb{C}^{rm}. \end{gather*}

Due to \eqref{ob Bev} and \eqref{ob wv}, we have the convergence
\begin{equation}\label{zb ymal}
y\left(\cdot;\mu^{(k)}\right)\to0 \quad \mbox{in} \quad (W^{n+r}_p)^{m} \quad \mbox{as} \quad k\rightarrow \infty.
\end{equation}

Hence,
\begin{equation}\label{zb fmal}
f\left(\cdot;\mu^{(k)}\right)\to0 \quad \mbox{in}\quad (W^{n}_p)^{m} \quad \mbox{as} \quad k\rightarrow \infty,
\end{equation}
because the problem \eqref{bound_z1}, \eqref{bound_z2} satisfies Limit Condition~(I) (this was proved on Step 1).

Since the finite-dimensional space $\mathbb{C}^{rm}$ is locally compact, then, according to~\eqref{ob wv}, we conclude that
$$1/2\leq\left\|c\left(\mu^{(k)}\right)\right\|_{\mathbb{C}^{rm}}\leq1.$$

Indeed,
\begin{gather*}
\left\|c\left(\mu^{(k)}\right)\right\|_{\mathbb{C}^{rm}}\leq \left\|B\left(\mu^{(k)}\right)\right\| \left\|y\left(\cdot,\mu^{(k)}\right)\right\|_{n+r,p}=\\
=\left\|B\left(\mu^{(k)}\right)\right\| \left\|B\left(\mu^{(k)}\right)\right\|^{-1} \left\|\omega_k\right\|_{n+r,p}=1
\end{gather*}
and
\begin{gather*}
\left\|c\left(\mu^{(k)}\right)\right\|_{\mathbb{C}^{rm}}= \left\|B\left(\mu^{(k)}\right)\left(
\left\|B\left(\mu^{(k)}\right)\right\|^{-1}\omega_k\right)\right\| = \left\|B\left(\mu^{(k)}\right)\right\|^{-1} \left\|B\left(\mu^{(k)}\right)\omega_k\right\|\geq \frac{1}{2}.
\end{gather*}
Hence, there exists a subsequence $$\left(c\left(\mu^{(k_p)}\right)\right)^\infty_{p=1}\subset \left(c\left(\mu^{(k)}\right)\right)^\infty_{k=1}$$ and a nonzero vector $c(\mu_0)\in \mathbb{C}^{rm}$ such that
\begin{equation}\label{zb cmal}
c\left(\mu^{(k_p)}\right)\to c(\mu_0) \quad \mbox{in} \quad \mathbb{C}^{rm} \quad \mbox{as} \quad p\rightarrow \infty.
\end{equation}
For every integer $p\in \mathbb{N}$, the vector-valued function $y\left(\cdot;\mu^{(k_p)}\right)\in\left(W^{n+r}_p\right)^{m}$ is a unique solution to the boundary-value problem
\begin{gather*}
L\left(\mu^{(k_p)}\right)y\left(t;\mu^{(k_p)}\right)=f\left(t;\mu^{(k_p)}\right),  \quad t\in (a,b),\\
B\left(\mu^{(k_p)}\right)y\left(\cdot;\mu^{(k_p)}\right)=c\left(\mu^{(k_p)}\right).
\end{gather*}
Due to \eqref{zb fmal} and \eqref{zb cmal} and condition $(\ast\ast)$ of Definition \ref{defin_vp}, we conclude that the function $y\left(\cdot;\mu^{(k_p)}\right)$ converges to the unique solution $y(\cdot;\mu_0)$ of the boundary-value problem
\begin{gather}
L(\mu_0)y(t,\mu_0)=0,\quad t\in (a,b), \nonumber\\
B(\mu_0)y(\cdot;\mu_0)=c(\mu_0) \label{kr ym}
\end{gather}
in the space $(W^{n+r}_p)^{m}$ as $k\rightarrow \infty$. But $y(\cdot;\mu_0)\equiv0$ due to \eqref{zb ymal}. This contradicts the boundary condition~\eqref{kr ym}, in which $c(\mu_0)\neq0$. Thus, our assumption is false, which proves the required property \eqref{ob Bv}.

\emph{Step 3.} Using the results of the previous steps, we will prove here that the problem \eqref{bound_z1}, \eqref{bound_z2} satisfies Limit Condition (II). According to \eqref{ob Av} and \eqref{ob Bv}, there exist numbers $\gamma'>0$ and $\varepsilon'>0$ such that
\begin{equation}\label{ner 1}
\|(L(\mu),B(\mu))\|\leq\gamma' \quad \mbox{for every} \quad \mu\in(\mu_0-\varepsilon',\mu_0+\varepsilon').
\end{equation}
Here, $\|\cdot\|$ denotes the norm of the bounded operator \eqref{(L,B)vp}. We arbitrarily choose a vector-valued function $y\in(W^{n+r}_p)^{m}$ and set $f(\cdot;\mu):=L(\mu)y$ and $c(\mu):=B(\mu)y$ for every $\mu\in(\mu_0-\varepsilon',\mu_0+\varepsilon')$. Hence,
\begin{equation}\label{riv 1}
y=(L(\mu),B(\mu))^{-1}(f(\cdot;\mu),c(\cdot;\mu)) \quad \mbox{for every} \quad \mu\in(\mu_0-\varepsilon',\mu_0+\varepsilon').
\end{equation}
Here, $(L(\mu),B(\mu))^{-1}$ denotes the operator inverse to the operator \eqref{(L,B)vp}. The latter operator is invertible due to condition $(\ast)$ of Definition \ref{defin_vp}.

Using \eqref{ner 1} and \eqref{riv 1}, we obtain the following inequalities for every $\mu\in(\mu_0-\varepsilon',\mu_0+\varepsilon')$:
\begin{gather*}
\bigl\|B(\mu)y-B(\mu_0)y\bigr\|_{\mathbb{C}^{rm}}
\leq\bigl\|(f(\cdot;\mu),c(\mu))-
(f(\cdot;\mu_0),c(\mu_0))\bigr\|_{(W^{n}_p)^{m}\times\mathbb{C}^{rm}}=\\
=\bigl\|(L(\mu),B(\mu))(L(\mu),B(\mu))^{-1}(f(\cdot;\mu),c(\mu))-
(f(\cdot;\mu_0),c(\mu_0))\bigr\|_{(W^{n}_p)^{m}\times\mathbb{C}^{rm}}\leq\\
\leq\gamma'\,\bigl\|(L(\mu),B(\mu))^{-1}
\bigl((f(\cdot;\mu),c(\mu))-
(f(\cdot;\mu_0),c(\mu_0))\bigr)\bigr\|_{n+r,p}=\\
=\gamma'\,\bigl\|(L(\mu_0),B(\mu_0))^{-1}(f(\cdot;\mu_0),c(\mu_0))
-(L(\mu),B(\mu))^{-1}(f(\cdot;\mu_0),c(\mu_0))\bigr\|_{n+r,p}.
\end{gather*}

The latter norm vanishes as $\mu\to\mu_0$, according to condition $(\ast\ast)$ of Definition \ref{defin_vp}. Since $$\|B(\mu)\|=O(1) \quad \mbox{and} \quad \bigl\|B(\mu)y-B(\mu_0)y\bigr\|_{\mathbb{C}^{rm}} \to0,$$ we have the convergence
 $B(\mu)y$ to $B(\mu_0)y$ in $\mathbb{C}^{rm}$ as $\mu\to\mu_0$ for all $y\in(W^{n+r}_p)^m$.
We conclude that the boundary-value problem \eqref{bound_z1}, \eqref{bound_z2} satisfies Limit Condition~(II).

The necessity is proved.

Let us prove \emph{the sufficiency}. Suppose that the boundary-value problem \eqref{bound_z1}, \eqref{bound_z2} satisfies Condition (0) and Limit Conditions (I) and (II). We show that the solution of this problem  continuously depends on the~parameter $\mu$ at $\mu_0$ in the space $(W^{n+r}_p)^{m}$. We divide our proof into four steps.

\emph{Step 1.} For every $\mu\in I$, we first consider Cauchy problem
\begin{gather}\label{s1}
L(\mu)y(t,\mu)=f(t,\mu),\quad t\in(a,b),\\
\hat{y}^{(j-1)}(a,\mu)=c_{j}(\mu),\quad j\in\{1,\ldots,r\}. \label{ku1}
\end{gather}
Here, for every $\mu$, the vector-valued function $f(\cdot,\mu)\in(W^{n}_p)^m$ and the vectors $c_{j}(\mu)\in\mathbb{C}^{rm}$ are arbitrarily chosen. The unique solution $\hat{y}(\cdot,\mu)$ of this problem belongs to the space $(W^{n+r}_p)^{m}$.

We show that the convergence of the right-hand sides of this problem
\begin{equation}\label{r_s.1}
f(\cdot,\mu)\to f(\cdot,\mu_0)\quad\mbox{in}\quad(W^{n}_p)^{m} \quad\mbox{as}\quad \mu\to \mu_0,
\end{equation}
\begin{equation}\label{gsp}
\begin{gathered}
c_{j}(\mu)\rightarrow c_{j}(\mu_0)\quad \mbox{in} \quad \mathbb{C}^{rm}\quad\mbox{as}\quad \mu\to \mu_0
\quad\mbox{for every}\quad j\in\{1,\ldots,r\}
\end{gathered}
\end{equation}
implies the convergence of its solutions
\begin{equation}\label{gsx}
\hat{y}(\cdot,\mu)\to \hat{y}(\cdot,\mu_0)\quad\mbox{in}\quad(W^{n+r}_p)^{m} \quad\mbox{as}\quad \mu\rightarrow\mu_0.
\end{equation}

We reduce the Cauchy problem \eqref{s1}, \eqref{ku1} to the following Cauchy problem for the system of differential equations of the first order:
\begin{gather}\label{Cauchi-reduced-DE}
x'(t,\mu)+K(t,\mu)x(t,\mu)=g(t,\mu),
\quad t\in(a,b),\\
x(a,\mu)=c(\mu).\label{Cauchi-reduced-cond}
\end{gather}
Here, the matrix-valued function $K(\cdot,\mu)$ and the vector-valued function $g(\cdot,\mu)$ are the same as in Step 1 of proof of the necessity. Moreover,
\begin{gather*}\label{ghggh}
x(\cdot,\mu):=\mbox{col}\left(\hat{y}(\cdot,\mu),\hat{y}'(\cdot,\mu),\ldots,\hat{y}^{(r-1)}(\cdot,\mu)\right)\in (W^{n+r}_p)^{rm}, \\
c(\mu):=\mbox{col}\left(c(\mu),\ldots,c_r(\mu)\right)\in \mathbb{C}^{rm}.
\end{gather*}

Since, by assumption, the boundary-value problem~\eqref{bound_z1}, \eqref{bound_z2} satisfies Limit Condition (I), we get
\begin{equation*}\label{Kgsx}
K(\cdot,\mu)\to K(\cdot,\mu_0)\quad\mbox{in}\quad(W^{n}_p)^{rm\times rm} \quad\mbox{as}\quad \mu\rightarrow\mu_0.
\end{equation*}
The conditions \eqref{r_s.1} and \eqref{gsp} imply the convergence of the right-hand sides of the problem \eqref{Cauchi-reduced-DE}, \eqref{Cauchi-reduced-cond}: \begin{equation*}\label{r_g.1}
g(\cdot,\mu)\to g(\cdot,\mu_0)\quad\mbox{in}\quad(W^{n}_p)^{rm} \quad\mbox{as}\quad \mu\to\mu_0,
\end{equation*}
\begin{equation*}\label{ggp}
\begin{gathered}
c(\mu)\rightarrow c(\mu_0)\quad\mbox{in}\quad\mathbb{C}^{rm}\quad\mbox{as}\quad \mu\to \mu_0.
\end{gathered}
\end{equation*}
Therefore, due to \cite[Theorem~1]{AtlasiukMikhailets20192}, \eqref{r_s.1} and \eqref{gsp}, we have the convergence \eqref{gsx}.

\emph{Step 2.} We prove that the boundary-value problem~\eqref{bound_z1}, \eqref{bound_z2} satisfies the condition $(\ast)$ of  Definition \ref{defin_vp}, that is, for sufficiently small $|\mu-\mu_0|$, the operator $(L(\mu),B(\mu))$ is invertible.

For every $\mu\in I$ and $k\in\{0,\ldots,r-1\}$, we consider matrix Cauchy problem
\begin{gather*}\label{zad kosh1v}
Y_k^{(r)}(t,\mu)+\sum\limits_{j=1}^rA_{r-j}(t,\mu)Y_k^{(r-j)}(t,\mu)=O_{m},\quad t\in (a,b),
\end{gather*}
with initial conditions
\begin{gather*}\label{zad kosh2v}
Y_k^{(j)}(t_0,\mu) = \delta_{k,j}I_m,\quad j \in \{1,\dots, r-1\}.
\end{gather*}

 Here, $$Y_k(t,\mu)=\left(y_k^{\alpha,\beta}(t,\mu)\right)_{\alpha,\beta=1}^m$$ is an unknown $m\times m$ matrix-valued function, the point $t_0 \in [a,b]$ is fixed, $\delta_{k,j}$ is the Kronecker symbol, $O_{m}$ and $I_m$ are, respectively, the zero and identity matrices of the order $m$. It consist of $m$ Cauchy problems of the form \eqref{s1}, \eqref{ku1} with $f=0$ for the vector-valued function $\hat{y}(\cdot,\mu)$, that are columns of the matrix $Y_k(\cdot,\mu)$.

We write the solution of the homogeneous differential equation \eqref{bound_z1} in the form
\begin{equation}\label{solution_by_Yv}
y(\cdot,\mu)=\sum\limits_{k=0}^{r-1}Y_k(\cdot,\mu)q_k(\mu),
\end{equation}
where $q_k(\mu)\in\mathbb{C}^{rm}$ are arbitrary column vectors \cite{Cartan1971}.

Since the right-hand sides of this problem do not depend on $\mu$, then
\begin{equation}\label{Zlimitv}
Y_l(\cdot,\mu)\to Y_l(\cdot,\mu_0) \quad\mbox{in}\quad(W^{n+r}_p)^{m\times m}\quad\mbox{as}\quad \mu\to\mu_0,
\end{equation}
according to Step 1. In view of Limit Condition~(II), this yields the convergence of the following block numerical matrices:
\begin{equation}\label{BZlimit}
\begin{gathered}
\bigl([B(\mu)Y_0(\cdot,\mu)],\ldots,
[B(\mu)Y_{r-1}(\cdot,\mu)]\bigr)\to\\
\to\bigl([B(\mu_0)Y_0(\cdot,\mu)],\ldots,
[B(\mu_0)Y_{r-1}(\cdot,\mu)]\bigr)\quad \mbox{in} \quad \mathbb{C}^{rm\times rm}\quad
\mbox{as}\quad\mu\to\mu_0.
\end{gathered}
\end{equation}
However, the limit matrix is nondegenerate by virtue of Limit Condition (0) and \cite[Lemma 1]{AtlasiukMikhailets20192}. Therefore, there is a positive number $\varepsilon_1$ such that, for every  $\mu\in(\mu_0-\varepsilon_1,\mu_0+\varepsilon_1)$, we have
\begin{equation}\label{BZdet}
\begin{gathered}
\det\bigl(M(L(\mu),B(\mu))\bigr)\neq0.
\end{gathered}
\end{equation}
Hence, by \cite[Lemma 1]{AtlasiukMikhailets20192}, the operator \eqref{(L,B)vp} is invertible.

\emph{Step 3.} We show that the boundary-value problem~\eqref{bound_z1}, \eqref{bound_z2} satisfies the condition $(\ast\ast)$ of Definition \ref{defin_vp}. Let us first analyze the case $f(\cdot,\mu)\equiv0$.

Consider a semihomogeneous boundary-value problem
 \begin{equation}\label{vv}
    L(\mu)v(\cdot;\mu)\equiv 0,
\end{equation}
 \begin{equation}\label{vvk}
 B(\mu)v(\cdot;\mu) =c(\mu),
\end{equation}
depending on the parameter $\mu$. According to Step 2, this problem has a unique solution  $v(\cdot;\mu)\in(W^{n+r}_p)^{m}$ for every $\mu\in(\mu_0-\varepsilon',\mu_0+\varepsilon')$, where $\varepsilon$ is the sufficiently small positive number. Suppose that
 \begin{equation}\label{zb c2v}
  c(\mu)\rightarrow c(\mu_0) \quad \mbox{in} \quad \mathbb{C}^{rm} \quad \mbox{as} \quad\mu\rightarrow \mu_0.
 \end{equation}
Let us show that
 \begin{equation}\label{zb v2v}
  v(\cdot,\mu)\rightarrow v(\cdot,\mu_0) \quad \mbox{in} \quad (W^{n+r}_p)^{m} \quad \mbox{as} \quad\mu\rightarrow \mu_0.
 \end{equation}

 For every $\mu\in(\mu_0-\varepsilon',\mu_0+\varepsilon')$, we write the general solution of the homogeneous differential equation \eqref{vv} in the form \eqref{solution_by_Yv}, that is
\begin{equation}\label{solution_by_vv}
v(\cdot,\mu)=\sum\limits_{k=0}^{r-1}Y_k(\cdot,\mu)q_k(\mu),
\end{equation}
where $q_k(\mu)\in\mathbb{C}^{rm}$ are arbitrary column vectors, and each matrix-valued function $Y_k(\cdot,\mu)\in(W^{n+r}_p)^{m\times m}$ as in Step 2. By virtue of \cite[Lemma 6]{AtlasiukMikhailets20191}, we have
\begin{equation*}\label{rivn_matruz vv}
B(\mu)v(\cdot,\mu)=\sum _{k=0}^{r-1}B(\mu)(Y_k(\cdot,\mu)q_k(\mu)) = \sum _{k=0}^{r-1}\left[B(\mu)Y_k(\cdot,\mu)\right]q_k(\mu).
\end{equation*}
Therefore, the boundary-value problem \eqref{vvk} is equivalent to
\begin{equation*}\label{rivn_matruz vv}
\sum _{k=0}^{r-1}\left[B(\mu)Y_k(\cdot,\mu)\right]q_k(\mu)=c(\mu).
\end{equation*}
The last condition can be rewritten in the form of a system of linear algebraic equations
\begin{equation*}\label{rivn_matruz vv}
\big(\left[B(\mu)Y_0(\cdot,\mu)\right], \ldots, \left[B(\mu)Y_{r-1}(\cdot,\mu)\right]\big)q(\mu)=c(\mu),
\end{equation*}
that is
\begin{equation*}\label{rivn_matruz vv}
\left[B(\mu)Y(\cdot,\mu)\right]q(\mu)=c(\mu)
\end{equation*}
for the column vector $$q(\mu):=\mbox{col}(q_0(\mu),\ldots, q_{r-1}(\mu)).$$ By virtue of relations \eqref{BZlimit}, \eqref{BZdet} and assumption \eqref{zb c2v}, we have the convergence
\begin{equation*}\label{rivn_matruz vv}
q(\mu)\left[B(\mu)Y(\cdot,\mu)\right]^{-1}c(\mu)\rightarrow\left[B(\mu_0)Y(\cdot,\mu_0)\right]^{-1}c(\mu_0)=q(\mu_0) \quad \mbox{as} \quad\mu\to \mu_0.
\end{equation*}
Therefore, the necessary convergence \eqref{zb v2v} follows from the formulas \eqref{Zlimitv}, \eqref{solution_by_vv}, namely
\begin{equation*}\label{rivn_matruz vv}
v(\cdot,\mu)=\sum _{k=0}^{r-1}Y_k(\cdot,\mu)q_k(\mu) \rightarrow \sum _{k=0}^{r-1}Y_k(\cdot,\mu_0)q_k(\mu_0)=v(\cdot,\mu_0) \quad \mbox{in} \quad (W^{n+r}_p)^{m} \quad \mbox{as} \quad\mu\to \mu_0.
\end{equation*}

\emph{Step 4.} We now turn to the general case of the inhomogeneous differential equation \eqref{bound_z1}. Suppose that the conditions \eqref{zb c2v} and
\begin{equation}\label{zb fvvy}
f(\cdot,\mu) \rightarrow f(\cdot,\mu_0) \quad \mbox{in} \quad (W^{n}_p)^{m} \quad \mbox{as} \quad\mu\to \mu_0
\end{equation}
are satisfied.
For every $\mu \in (\mu_0-\varepsilon_1,\mu_0+\varepsilon_1)$, we set
$$z(\cdot;\mu)=y(\cdot;\mu)-\hat{y}(\cdot;\mu),$$
where vector-valued function $y(\cdot;\mu)$ is a solution of the inhomogeneous boundary-value problem \eqref{bound_z1}, \eqref{bound_z2} and the vector-valued function $\hat{y}(\cdot;\mu)$ is a solution of the Cauchy problem \eqref{s1}, \eqref{ku1}, with $c_{j}(\mu)\equiv0$. Then $z(\cdot;\mu)$ is a solution of the semihomogeneous boundary-value problem
\begin{gather*}\label{s2}
L(\mu)z(\cdot,\mu)\equiv0,\\
B(\mu)z(\cdot,\mu)=\tilde{c}(\mu),\\
\tilde{c}(\mu)=c(\mu)-B(\mu)\hat{y}(\cdot,\mu) \in \mathbb{C}^{rm}.
\end{gather*}
In Step 1, it was shown that $\hat{y}(\cdot,\mu)$ satisfies the property \eqref{gsx} if the condition \eqref{zb fvvy} is fulfilled. By virtue of this property and the assumption that the boundary-value problem~\eqref{bound_z1}, \eqref{bound_z2} satisfies Limit Condition (I) and \eqref{zb c2v}, we get
 \begin{equation*}\label{zb c2tv}
  \tilde{c}(\mu)\rightarrow \tilde{c}(\mu_0) \quad \mbox{in} \quad \mathbb{C}^{rm} \quad \mbox{as} \quad\mu\rightarrow \mu_0.
 \end{equation*}
Hence, according to Step 3, we conclude that
\begin{equation*}\label{zb fvy}
z(\cdot,\mu) \rightarrow z(\cdot,\mu_0) \quad \mbox{in} \quad (W^{n+r}_p)^{m} \quad \mbox{as} \quad\mu\to \mu_0.
\end{equation*}
Therefore, from the formula \eqref{gsx}, we obtain the necessary convergence
\begin{gather*}\label{zb fvy}
y(\cdot,\mu)= \hat{y}(\cdot,\mu)+z(\cdot,\mu)\rightarrow \hat{y}(\cdot,\mu_0)+z(\cdot,\mu_0)=y(\cdot,\mu_0)\\ \quad \mbox{in} \quad (W^{n+r}_p)^{m} \quad \mbox{as} \quad\mu\to \mu_0.
\end{gather*}

The sufficiency is proved.

\end{proof}

\begin{proof}[Proof of Theorem~\ref{3.6.th-bound v}]
Let us first prove the left-hand side of~\eqref{3.6.bound}. Limit Conditions (I) and (II) imply the strong convergence
$$
(L(\mu),B(\mu))\stackrel{s}{\longrightarrow}(L(\mu_0),B(\mu_0))
\quad\mbox{as}\quad \mu\to\mu_0
$$
of the continuous operators acting from $(W^{n+r}_p)^{m}$ to $(W^{n}_p)^{m}\times \mathbb{C}^{rm}$. Hence, there exist numbers $\gamma'>0$ and $\varepsilon >0$ such that the norm of this operator satisfies inequality \eqref{ner 1} if $|\mu -\mu_0|<\varepsilon$.
Indeed, if we assume the contrary, then we can find a sequence of positive numbers $\left(\mu^{(k)}\right)_{k=1}^{\infty}$ such that
\begin{gather*}
\mu^{(k)}\to0\quad\mbox{and}\quad\left\|\left(L\left(\mu^{(k)}\right),
B\left(\mu^{(k)}\right)\right)\right\|\to\infty
\quad\mbox{as}\quad k\to\infty.
\end{gather*}
However, by the Banach -- Steinhaus theorem, this contradicts the fact that
$$\left(L\left(\mu^{(k)}\right)B\left(\mu^{(k)}\right)\right) \stackrel{s}{\longrightarrow} \left(L(\mu_0), B(\mu_0)\right) \quad \text{as} \quad k\to\infty.$$

Now, based on inequality \eqref{ner 1}, we conclude that
\begin{gather*}
\widetilde{d}_{n,p}(\mu)=\bigl\|L(\mu)y(\cdot;\mu_0)-L(\mu)y(\cdot;\mu)\bigr\|_{n,p}+
\bigl\|B(\mu)y(\cdot;\mu_0)-B(\mu)y(\cdot;\mu)\bigr\|_{\mathbb{C}^{rm}}\leq\\
\leq\bigl\|L(\mu)\bigr\|\bigl\|y(\cdot;\mu_0)-y(\cdot;\mu)\bigr\|_{n+r,p}+
\bigl\|B(\mu)\bigr\|\bigl\|y(\cdot;\mu_0)-y(\cdot;\mu)\bigr\|_{n+r,p}
\leq \\ \leq \gamma'\bigl\|y(\cdot;\mu_0)-y(\cdot;\mu)\bigr\|_{n+r,p},
\end{gather*}
for every small $|\mu-\mu_0|$. Thus, we have established the left-hand side of the estimate \eqref{3.6.bound}, where $\gamma_{1}:=1/\gamma'$.

Let us prove the right-hand side of the estimate \eqref{3.6.bound}. By Theorem~\ref{nep v}, the boundary-value problem \eqref{bound_z1}, \eqref{bound_z2} satisfies Definition~\ref{defin_vp}. Therefore, the operator \eqref{(L,B)vp} is invertible for every small $|\mu-\mu_0|$. Moreover, we have the following strong convergence
$$
(L(\mu),B(\mu))^{-1} \stackrel{s}{\longrightarrow} (L(\mu_0),B(\mu_0))^{-1}, \quad \mu\to\mu_0.
$$
Indeed, for arbitrary $f\in(W^{n}_p)^{m}$ and $c\in\mathbb{C}^{rm}$, under the condition $(\ast\ast)$ of Definition~\ref{defin_vp}, we get the following convergence
\begin{equation*}
(L(\mu),B(\mu))^{-1}(f,c)=:y(\cdot,\mu)\to
y(\cdot,\mu_0):=(L(\mu_0),B(\mu_0))^{-1}(f,c)
\end{equation*}
in space $(W^{n+r}_p)^{m}$ as $\mu\to\mu_0$.

Hence, by the Banach -- Steinhaus theorem, the norms of these inverse operators are bounded, namely, there exist positive numbers $\varepsilon$ and $\gamma_{2}$ such that the norm of the inverse operator
$$
\left\|(L(\mu),B(\mu))^{-1}\right\|\leq\gamma_{2} \quad\mbox{for every}\quad \gamma\in(\mu_0-\varepsilon,\mu_0+\varepsilon).
$$
Thus, for every $\mu\in(\mu_0-\varepsilon,\mu_0+\varepsilon)$, we conclude that
\begin{gather*}
\big\|y(\cdot,\mu_0)-y(\cdot,\mu)\big\|_{n+r,p}
=\|(L(\mu),B(\mu))^{-1}(L(\mu),B(\mu))
(y(\cdot,\mu_0)-y(\cdot,\mu))\|_{n+r,p}\leq\\
\leq\gamma_{2}\,\|(L(\mu),B(\mu))
(y(\cdot,\mu_0)-y(\cdot,\mu))\|_{(W^{n}_p)^{m}\times\mathbb{C}^{rm}}=\gamma_{2}\,\widetilde{d}_{n,p}(\mu).
\end{gather*}
This directly yields the right-hand side of the two-sided estimate \eqref{3.6.bound}.
\end{proof}

\section{Acknowledgments}

The work of the first named author was funded by the Isaac Newton Institute of Mathematical Sciences "Solidarity Program" \, and the London Mathematical Society. The author wishes to thank the Department of Mathematics, King's College London, for their hospitality.

The work of the second named author was funded by Postdoctoral Fellowship EU-MSCA4Ukraine (number: 1244691, WBS-number: 4100609). This project has received funding through the MSCA4Ukraine project, which is funded by the European Union. Views and opinions expressed are however those of the author only and do not necessarily reflect those of the European Union, the European Research Executive Agency or the MSCA4Ukraine Consortium. Neither the European Union nor the European Research Executive Agency, nor the MSCA4Ukraine Consortium as a whole nor any individual member institutions of the MSCA4Ukraine Consortium can be held responsible for them.”


\begin{thebibliography}{999}

\bibitem{Ashordia}
Ashordia M. \emph{Criteria of correctness of linear boundary value problems for systems of generalized ordinary differential equations}. Czechoslovak Math. J. 1996, \textbf{46} (3), 385--404.

\bibitem{Atl3} 
Atlasiuk O.M. \emph{Limit theorems for solutions of multipoint boundary-value problems in Sobolev spaces}. J. Math. Sci. 2020, \textbf{247} (2), 238--247. doi:10.1007/s10958-020-04799-w

\bibitem{Atl4} 
Atlasiuk O.M. \emph{Limit theorems for solutions of multipoint boundary-value problems with a parameter in Sobolev spaces}. Ukrain. Math. J. 2021, \textbf{72} (8), 1175--1184. doi:10.1007/s11253-020-01859-x

\bibitem{AtlasiukMikhailets20191}
Atlasiuk O.M., Mikhailets V.A. \emph{Fredholm one-dimensional boundary-value problems in Sobolev spaces}. Ukrain. Math. J. 2019, \textbf{70} (10), 1526--1537. doi:10.1007/s11253-019-01588-w

\bibitem{AtlasiukMikhailets20192}
Atlasiuk O.M., Mikhailets V.A. \emph{Fredholm one-dimensional boundary-value problems with parameter in Sobolev spaces}. Ukrain. Math. J. 2019, \textbf{70} (11), 1677--1687. doi:10.1007/s11253-019-01599-7

\bibitem{BochSAM2004}
Boichuk A.A., Samoilenko A.M. Generalized inverse operators and Fredholm boundary-value problems. VSP, Utrecht-Boston, 2004.	

\bibitem{Cartan1971}
Cartan A. Differential calculus. Differential forms. Mir, Moscow, 1971. (in Russian)

\bibitem{Dunford}
Dunford N., Schwartz J.T. Linear operators. I. General theory. Interscience Publishers, New York, London, 1958.

\bibitem{Gikhman}
Gikhman I.I. \emph{Concerning a theorem of N.N.~Bogolyubov}. Ukr. Mat. Zh. 1952, \textbf{4} (2), 215--219.  (in Russian)

\bibitem{GnypKodliukMikhailets2015}
Gnyp E.V., Kodlyuk T.I., Mikhailets V.A. \emph{Fredholm boundary-value problems with parameter in Sobolev spaces}. Ukrain. Math. J. 2015, \textbf{67} (5),  658--667. doi:10.1007/s11253-015-1105-1

\bibitem{GoriunovMikhailetsPankrashkin2013EJDE} 
Goriunov A.S., Mikhailets V.A., Pankrashkin K. \emph{Formally self-ajoint quasi-differential operators and boundary-value problems}. Electr. J. Differ. Equa. 2013, \textbf{2013}, 1--16.

\bibitem{GoriunovMikhailets2010MFAT} 
Goriunov A.S., Mikhailets V.A. \emph{Regularization of singular Sturm--Liouville equations}. Methods Funct. Anal. Topology. 2010, \textbf{16} (2), 120--130.

\bibitem{GoriunovMikhailets2012UMJ} 
Goriunov A.S., Mikhailets V.A. \emph{Regularization of two-term differential equations with singular coefficients by quasiderivatives}. Ukrain. Math. J. 2012, \textbf{63} (9), 1361--1378. doi:10.1007/s11253-012-0584-6

\bibitem{GoriunovMikhailets2010MN} 
Goriunov A.S., Mikhailets V.A. \emph{Resolvent convergence of Sturm–-Liouville operators with singular potentials}. Math. Notes. 2010, \textbf{87} (1), 287--292. doi:10.1134/S0001434610010372

\bibitem{Hnyp}
Hnyp E.V. \emph{Continuity of the solutions of one-dimensional boundary-value problems with respect to the parameter in the Slobodetskii spaces}. Ukrain. Math. J. 2016, \textbf{68} (6), 849--861. doi:10.1007/s11253-016-1261-y

\bibitem{HnypMikhailetsMurach2017}
Hnyp Y.V., Mikhailets V.A., Murach A.A. \emph{Parameter-dependent one-dimensional boundary-value problems in Sobolev spaces}. Electr. J. Differ. Equa. 2017, \textbf{2017} (81), 1--13.

\bibitem{Hermander1985}
H\"ormander L. The analysis of linear partial differential operators. III: Pseudo-differential operators. Springer-Verlag, Berlin, Heidelberg, 2007.

\bibitem{Kigyradze1987}
Kiguradze I.T. \emph{Boundary-value problems for systems of ordinary differential equations}. J. Soviet Math. 1988, \textbf{43} (2), 2259--2339. doi:10.1007/BF01100360

\bibitem{Kigyradze2003}
Kiguradze I.T. \emph{On boundary value problems for linear differential systems with singularities}. Differ. Equ. 2003, \textbf{39} (2), 212--225. doi:10.1023/А:1025152932174

\bibitem{Kigyradze1975}
Kiguradze I.T. Some singular boundary-value problems for ordinary differential equations. Tbilisi University, Tbilisi, 1975. (in Russian)

\bibitem{KodliukMikhailets2013}
Kodlyuk T.I., Mikhailets V.A. \emph{Solutions of one-dimensional boundary-value problems with a parameter in Sobolev spaces}. J. Math. Sci. 2013, \textbf{190} (4), 589--599. doi:10.1007/s10958-013-1272-2

\bibitem{KrasnKrein}
Krasnoselskii M.A., Krein S.G. \emph{On the principle of averaging in nonlinear mechanics}. Uspekhi Mat. Nauk. 1955, \textbf{10} (3), 147--153.

\bibitem{KurzweilVorel}
Kurzweil J., Vorel Z. \emph{Continuous dependence of solutions of differential equations on a parameter}. Czechoslovak Math. J. 1957, \textbf{7} (4), 568--583. (in Russian)

\bibitem{Levin}
Levin A.Yu. \emph{The limiting transition for nonsingular systems}. Dokl. Akad. Nauk SSSR. 1967, \textbf{176} (4), 774--777. (in Russian)

\bibitem{AtlasiukMikhailets2024}
Mikhailets V., Atlasiuk O. \emph{Differential systems in Sobolev spaces with generic inhomogeneous boundary conditions}. Carpathian Math. Publ. 2024, \textbf{16} (2), 523--538. doi:10.15330/cmp.16.2.523-538

\bibitem{AtlasiukMikhailets20194}
Mikhailets V., Atlasiuk O. \emph{The solvability of inhomogeneous boundary-value problems in Sobolev spaces}. Banach J. Math. Anal. 2024, \textbf{18(2)} (12). doi:10.1007/s43037-023-00316-8

\bibitem{MikhAtlSkor} 
Mikhailets V.A., Atlasiuk O.M., Skorobohach T.B. \emph{On the solvability of Fredholm boundary-value problems in fractional Sobolev spaces}. Ukrain. Math. J. 2023, \textbf{75} (1), 107--117. doi:10.1007/s11253-023-02188-5

\bibitem{MikhMurachSol2016}
Mikhailets V.A., Murach A.A., Soldatov V.O. \emph{Continuity in a parameter of solutions to generic boundary-value problems}. Electron. J. Qual. Theory Differ. Equ. 2016, \textbf{2016} (A87), 1--16. doi:10.14232/ejqtde.2016.1.87

\bibitem{Nguen}
Nguen T.K. \emph{On the dependence of a solution to a linear system of differential equations on a parameter}. Differ. Equ. 1993, \textbf{29} (6), 830--835.

\bibitem{Opial}
Opial Z. \emph{Continuous parameter dependence in linear systems of differential equations}. J. Differ. Equ. 1967, \textbf{3} (4), 571--579.

\bibitem{Reid}
Reid W.T. \emph{Some limit theorems for ordinary differential systems}. J. Differ. Equ. 1967, \textbf{3} (3), 423--439.
\end{thebibliography}
\end{document}